%% LyX 1.6.1 created this file.  For more info, see http://www.lyx.org/.
%% Do not edit unless you really know what you are doing.
\documentclass[oneside,english]{amsart}
\usepackage[latin9]{inputenc}
\usepackage{amsthm}
\usepackage{amssymb}

\makeatletter

\newif\ifmargin
%\margintrue
\margintrue

%%%%%%%%%%%%%%%%%%%%%%%%%%%%%% LyX specific LaTeX commands.
%% Special footnote code from the package 'stblftnt.sty'
%% Author: Robin Fairbairns -- Last revised Dec 13 1996
\let\SF@@footnote\footnote
\def\footnote{\ifx\protect\@typeset@protect
    \expandafter\SF@@footnote
  \else
    \expandafter\SF@gobble@opt
  \fi
}
\expandafter\def\csname SF@gobble@opt \endcsname{\@ifnextchar[%]
  \SF@gobble@twobracket
  \@gobble
}
\edef\SF@gobble@opt{\noexpand\protect
  \expandafter\noexpand\csname SF@gobble@opt \endcsname}
\def\SF@gobble@twobracket[#1]#2{}

%%%%%%%%%%%%%%%%%%%%%%%%%%%%%% Textclass specific LaTeX commands.
\numberwithin{equation}{section} %% Comment out for sequentially-numbered
\numberwithin{figure}{section} %% Comment out for sequentially-numbered
\theoremstyle{plain}
\theoremstyle{plain}
\newtheorem{thm}{Theorem}
  \theoremstyle{plain}
  \newtheorem{lem}[thm]{Lemma}
  \theoremstyle{plain}
  \newtheorem{prop}[thm]{Proposition}
  \theoremstyle{plain}
  \newtheorem{cor}[thm]{Corollary}
  \theoremstyle{definition}
  \newtheorem{defn}[thm]{Definition}

\makeatother

\usepackage{babel}

\begin{document}

\title{On the reconstruction of graph invariants}

\author{T. Kotek}

\thanks{Partially supported by ISF-Grant \textquotedblright{}Model Theoretic
Interpretations of Counting Functions\textquotedblright{}, 2007-2010. }

\curraddr{Department of Computer Science\\
Technion --- Israel Institute of Technology\\
Haifa, Israel}

\email{tkotek@cs.technion.ac.il}

\maketitle
\global\long\def\mc#1{\mathcal{#1}}

\global\long\def\angl#1{\left\langle #1\right\rangle }

\global\long\def\chp{\chi_{p-h}}

\global\long\def\chh{\chi_{harm}}

%\author{My Co-author\thanksref{coemail}}
%\address{My Co-author's Department\\ My Co-author's University\\
%   My Co-author's City, My Co-author's Country} \thanks[ALL]{Thanks
%   to everyone who should be thanked} \thanks[myemail]{Email:
%    \href{mailto:myuserid@mydept.myinst.myedu} {\texttt{\normalshape
%   myuserid@mydept.myinst.myedu}}} \thanks[coemail]{Email:
%   \href{mailto:couserid@codept.coinst.coedu} {\texttt{\normalshape
%   couserid@codept.coinst.coedu}}}

\begin{abstract}
The reconstruction conjecture has remained open for simple undirected
graphs since it was suggested in 1941 by Kelly and Ulam. In an attempt
to prove the conjecture, many graph invariants have been shown to
be reconstructible from the vertex-deleted deck, and in particular,
some prominent graph polynomials. Among these are the Tutte polynomial,
the chromatic polynomial and the characteristic polynomial. We show that the interlace polynomial, the $U$-polynomial,
the universal edge elimination polynomial $\xi$ and the colored versions of the latter two are
reconstructible.
%We also present a method of reconstructing boolean graph invariants, or in other words, proving recognizability of (possibly 
%colored) graph properties, using first order logic. 
\end{abstract}

\section{Introduction}
\subsection{The Reconstruction Conjecture}
Let $G=\left(V(G),E(G)\right)$ be a simple undirected graph. For
a vertex $v\in V(G)$, we define the vertex-deleted subgraph $G_{v}$
as the subgraph obtained from $G$ by deleting $v$ and all its incident
edges. We define the \emph{deck} of $G$, $\mathcal{D}(G)$, as the collection
of subgraphs $G_v$ for every $v\in V(G)$. The deck $\mathcal{D}(G)$
is a multiset which consists of isomorphism types of subgraphs. We
say a graph $H$ is a \emph{reconstruction} of $G$ if $V(G)=V(H)$
and for every $v\in V(G)$, $G_{v}\cong H_{v}$. A graph $G$ is \emph{reconstructible
}if every reconstruction of it is isomorphic to it. 

The~reconstruction~conjecture asserts that every simple undirected
graph with at least three vertices is reconstructible. We say a graph invariant
$\mu$ is reconstructible if for every $H$ and $G$ which are reconstructions
of each other, $\mu(G)=\mu(H)$. If $\mu$ is the characteristic function of a class of graphs $\mathcal{C}$, we say that $\mathcal{C}$ is recognizable. 

For graphs $H$ and $G$ we define $s(H,G)$ as the number of subgraphs of $G$ which
are isomorphic to $H$. 
\begin{lem}
[P. J. Kelly, 1957] For any $H$ and $G$ such that $|V(H)|<|V(G)|$,
$s(H,G)$ is reconstructible from the deck of $G$. 
\end{lem}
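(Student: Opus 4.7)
The plan is a standard double-counting argument. I would count in two ways the number $N$ of pairs $(H',v)$, where $H'$ is a subgraph of $G$ isomorphic to $H$ and $v\in V(G)\setminus V(H')$.

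Counting by first fixing $H'$: each of the $s(H,G)$ copies of $H$ in $G$ contributes exactly $|V(G)|-|V(H)|$ admissible vertices $v$, so
\[
N = s(H,G)\cdot\bigl(|V(G)|-|V(H)|\bigr).
\]
Counting by first fixing $v$: the subgraphs of $G$ that avoid $v$ are precisely the subgraphs of $G_v$, so
\[
N = \sum_{v\in V(G)} s(H,G_v).
\]
Equating and using $|V(H)|<|V(G)|$ to divide gives
\[
s(H,G) = \frac{1}{|V(G)|-|V(H)|}\sum_{v\in V(G)} s(H,G_v).
\]

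To finish, I would argue that the right-hand side is determined by $\mathcal{D}(G)$. The order $|V(G)|$ is reconstructible since it equals $|\mathcal{D}(G)|$. For each card $G_v$ in the deck, the value $s(H,G_v)$ is a graph invariant of $G_v$ and hence depends only on the isomorphism type stored in the card; summing these values over the multiset $\mathcal{D}(G)$ therefore gives a quantity that depends only on the deck. Since $|V(H)|<|V(G)|$, the denominator is nonzero, and the displayed formula expresses $s(H,G)$ purely in terms of deck-reconstructible data.

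There is no real obstacle beyond setting up the bijective count correctly; the only point to be careful about is that $s(H,G_v)$ must be interpreted as a function of the isomorphism type of $G_v$ so that the sum is well defined on the multiset $\mathcal{D}(G)$, and that the hypothesis $|V(H)|<|V(G)|$ is precisely what prevents division by zero.
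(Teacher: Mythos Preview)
Your argument is the classical double-counting proof of Kelly's lemma and is correct as written. Note that the paper does not supply its own proof of this statement; it is merely cited and attributed to Kelly (1957), so there is nothing to compare against beyond observing that what you have written is exactly the standard textbook proof.
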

Another useful lemma by Tutte is the following (see \cite{pr:Tutte79}).
\begin{lem}
\label{lemmaA}The number of disconnected spanning subgraphs of $G$ having a specified
number of components in each isomorphism class and the number of connected spanning subgraphs with a given number of
edges are reconstructible. 
\end{lem}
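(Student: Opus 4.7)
The plan has two parts. First, I establish that $|E(G)|$ is reconstructible, e.g.\ via
\[
|E(G)|=\frac{1}{|V(G)|-2}\sum_{v\in V(G)}|E(G_v)|,
\]
since each edge of $G$ is missing from exactly two vertex-deleted subgraphs. Consequently, the total number of spanning subgraphs with exactly $m$ edges is $\binom{|E(G)|}{m}$, a reconstructible quantity. The two statements in the lemma are then linked as follows: if $D_m(G)$ denotes the number of disconnected spanning subgraphs with $m$ edges, then the number of connected spanning subgraphs with $m$ edges is $\binom{|E(G)|}{m}-D_m(G)$; and $D_m(G)$ itself is the sum, over multisets $\mathcal{F}=(H_1,\ldots,H_k)$ of connected graphs with $k\geq 2$, $\sum_i|V(H_i)|=|V(G)|$, and $\sum_i|E(H_i)|=m$, of the quantities $N(\mathcal{F};G)$ counting spanning subgraphs of $G$ whose component multiset equals $\mathcal{F}$. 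It therefore suffices to show that $N(\mathcal{F};G)$ is reconstructible for every such $\mathcal{F}$.

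Fix $\mathcal{F}$ and choose a split $\{1,\ldots,k\}=I\sqcup J$ with both $I$ and $J$ non-empty; set $H_I=\bigsqcup_{i\in I}H_i$ and $H_J=\bigsqcup_{j\in J}H_j$. Both $|V(H_I)|$ and $|V(H_J)|$ are strictly less than $|V(G)|$, so $s(H_I,G)$ and $s(H_J,G)$ are reconstructible by Kelly's lemma. I would then expand
\[
|\mathrm{Aut}(H_I)|\cdot|\mathrm{Aut}(H_J)|\cdot s(H_I,G)\cdot s(H_J,G)
\]
as the number of ordered pairs $(\phi_I,\phi_J)$ of edge-preserving injections $H_I\hookrightarrow G$ and $H_J\hookrightarrow G$, and group these pairs by the overlap $A=\mathrm{im}(\phi_I)\cap\mathrm{im}(\phi_J)\subseteq V(G)$ together with the induced identification of the preimages. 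Each overlap pattern $\pi$ produces a ``merged'' graph $F_\pi$ on $|V(G)|-|A|$ vertices. When $A=\emptyset$, $F_\pi=H_I\sqcup H_J\cong\bigsqcup_i H_i$ and the number of pairs with this pattern is a known multiple of $N(\mathcal{F};G)$; when $A\neq\emptyset$, $|V(F_\pi)|<|V(G)|$, so $s(F_\pi,G)$ is reconstructible by Kelly. Solving the resulting identity for the empty-overlap contribution recovers $N(\mathcal{F};G)$ as a reconstructible quantity.

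The main obstacle is the combinatorial accounting in the overlap identity: determining, for each merged graph $F_\pi$, how many ordered pairs $(\phi_I,\phi_J)$ realize it. This coefficient depends on $|\mathrm{Aut}(H_I)|$, $|\mathrm{Aut}(H_J)|$, $|\mathrm{Aut}(F_\pi)|$, and on how many distinct ways $F_\pi$ admits a decomposition into subgraphs isomorphic to $H_I$ and $H_J$ sharing exactly the image of $A$; repeated isomorphism types among the $H_i$ further complicate the bookkeeping. Once the coefficient table is correctly tabulated, every term in the identity other than the $A=\emptyset$ contribution involves an $s(F_\pi,G)$ with $|V(F_\pi)|<|V(G)|$ and is thus reconstructible, so $N(\mathcal{F};G)$ is as well. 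Feeding this back into the first paragraph yields both statements of the lemma.
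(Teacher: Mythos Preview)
The paper does not actually prove Lemma~\ref{lemmaA}; it attributes it to Tutte~\cite{pr:Tutte79}. Your argument is correct and is essentially Kocay's extension of Kelly's lemma (which the paper also cites as~\cite{ar:Kocay81}): the key identity you are using can be written cleanly as
\[
s(H_I,G)\,s(H_J,G)\;=\;\sum_{F} d(H_I,H_J;F)\,s(F,G),
\]
where $d(H_I,H_J;F)$ counts pairs of subgraphs of $F$ isomorphic to $H_I$ and $H_J$ whose union is all of $F$; the only $F$ on $|V(G)|$ vertices appearing on the right is $H_I\sqcup H_J$, its coefficient is positive (the pair $(H_I,H_J)$ itself witnesses this), and every other $F$ has strictly fewer vertices so $s(F,G)$ is reconstructible by Kelly. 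This dissolves the bookkeeping worry you flag about automorphisms and repeated types: you never need to compute the coefficients explicitly, only to know that the leading one is nonzero and the rest multiply reconstructible quantities.

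For comparison, the paper does implicitly reprove the lemma inside the proof of Theorem~\ref{UReconColor} (in the colored setting). The method there is the same inclusion--exclusion, except that it takes the full product $s(F_1,G)\cdots s(F_t,G)$ over all $t\ge 2$ components at once and subtracts the non-spanning configurations, rather than first collapsing the component list into two blocks $I\sqcup J$ as you do. Your bipartite split keeps the overlap analysis to a single set $A$ instead of a full intersection pattern among $t$ pieces, which is a mild simplification; otherwise the two arguments are the same.
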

An edge-colored graph $G$ is $G=\left(V(G),E(G),f\right)$ where
$f:E(G)\to[\Lambda]$ is a coloring of the edges of $G$. Similarly to
the deck of a simple undirected graph, we define the deck $\mathcal{D}(G)$
of a colored graph $G$ to be the multiset of colored graphs $G_{v}$
for all $v\in V(G)$ which consist of $G$ with $v$ and all incident
edges deleted from it, and $f$ restricted to $V(G)-\left\{ v\right\} $.
Reconstruction results on graphs
can often be adapted to colored graphs, and in particular, Kelly's
lemma works for colored graphs, see \cite{ar:MW78,ar:Weinstein75}.

\subsection{Graph polynomials}
Graph polynomials are functions $F$ from the class of simple undirected graphs to a polynomial ring $\mathcal{R}[\bar{x}]$ that are invariant under graph isomorphism. They  are natural to the study of reconstruction.
The characteristic polynomial $P(G,x)$ is the characteristic polynomial
of the adjacency matrix $A_{G}$ of the graph $G$, 
$P(G;x)=det(x\cdot\mathbf{1}-A_{G})=\sum_{S\subseteq V(G)} (-x)^{|V(G)|-|S|} det( A_{G_S})$.
Tutte showed a generalization
of the characteristic polynomial is reconstructible in \cite{pr:Tutte79}. 
The Tutte polynomial is 
$T(G;x,y)=\sum_{M\subseteq E(G)}\left(x-1\right)^{r(E)-r(M)}\left(y-1\right)^{n(M)}$,
 where $k(M)$ is the number of connected components of the spanning
subgraph $\left(V(G),M\right)$, $r(M)=|V(G)|-k(M)$ is its rank and
$n\left(M\right)=|M|-|V(G)|+k\left(M\right)$ is its nullity. The Tutte polynomial was
shown to be reconstructible by Tutte. The
chromatic polynomial $\chi(G,\lambda)$, which counts the number of
proper colorings of $G$ with at most $\lambda$ colors, is an evaluation
of the Tutte polynomial, and so is reconstructible as well.

\section{\label{secLab}Main Results}
\subsection{Reconstructibility of graph polynomials}
For a graph $G$ and vertex set $S$, we denote by $G[S]$ the subgraph
induced by $S$. The polynomial
$q(G;x,y)=\sum_{S\subseteq V(G)}\left(x-1\right){}^{rk\left(G[S]\right)}\left(y-1\right)^{n\left(G[S]\right)}$ 
where $rk(G[S])$ is the rank of $G[S]$'s adjacency matrix, and $n(G[S])+rk(G[S])=|S|$ is the 2-variable interlace polynomial (\cite{ar:ABS04}). 
\begin{prop}
The interlace polynomial is reconstructible.
\end{prop}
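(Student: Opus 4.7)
\emph{Proof plan.} The plan is to expand $q(G;x,y)$ by isomorphism type of induced subgraph, apply Kelly's Lemma to every contribution from a proper induced subgraph, and then recover the single remaining monomial using the already-known reconstructibility of the characteristic polynomial.

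First I would write
$$q(G;x,y) = \sum_{H} N_{\mathrm{ind}}(H,G)\,(x-1)^{rk(H)}(y-1)^{n(H)},$$
where the outer sum runs over isomorphism classes $H$ and $N_{\mathrm{ind}}(H,G)$ is the number of $S \subseteq V(G)$ with $G[S]\cong H$. Because $rk(H)+n(H)=|V(H)|$, the term $H=G$ is a single monomial $(x-1)^{rk(G)}(y-1)^{n(G)}$ of total degree $|V(G)|$, whereas every other summand has strictly smaller total degree.

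By Kelly's Lemma, $s(H,G)$ is reconstructible for every $H$ with $|V(H)|<|V(G)|$, and standard inclusion-exclusion on the poset of graphs on a fixed vertex set (the identity $s(H,G)=\sum_{H'\supseteq H,\, V(H')=V(H)} c(H,H')\,N_{\mathrm{ind}}(H',G)$ is unitriangular in the induced counts) converts these into $N_{\mathrm{ind}}(H,G)$ for the same range of $H$. Hence
$$\tilde q(G;x,y) := \sum_{|V(H)|<|V(G)|} N_{\mathrm{ind}}(H,G)(x-1)^{rk(H)}(y-1)^{n(H)}$$
is reconstructible, and $q(G;x,y) = \tilde q(G;x,y) + (x-1)^{rk(G)}(y-1)^{n(G)}$.

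It remains to determine the missing monomial. Since $|V(G)|$ is reconstructible and $rk(G) + n(G) = |V(G)|$, it suffices to recover $rk(G)$. For this I would invoke the reconstructibility of the characteristic polynomial $P(G;x) = \det(x\mathbf{1} - A_G)$ noted in the introduction: $rk(G)$ equals $|V(G)|$ minus the multiplicity of $0$ as a root of $P(G;x)$. The main obstacle is precisely this last step --- Kelly's Lemma alone handles every proper induced subgraph, so the entire argument rests on finding an independent handle on $rk(G)$, which the characteristic polynomial fortunately supplies.
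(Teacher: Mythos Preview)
Your overall architecture matches the paper's exactly: split off the proper induced subgraphs and handle them via Kelly's lemma, then recover the single top term $(x-1)^{rk(G)}(y-1)^{n(G)}$ by reconstructing $rk(G)$ separately. The paper's sketch says precisely this.

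The one substantive difference is how $rk(G)$ is recovered, and here your route has a gap. You read $rk(G)$ off as $|V(G)|$ minus the multiplicity of $0$ in the characteristic polynomial $P(G;x)=\det(xI-A_G)$. That identity holds because $A_G$ is real symmetric and hence diagonalisable over $\mathbb{R}$; it yields the \emph{real} rank of $A_G$. But the interlace polynomial of \cite{ar:ABS04} is defined with rank and nullity taken over $\mathrm{GF}(2)$, and these need not agree with the real rank: for $K_3$ one has real rank $3$ (eigenvalues $2,-1,-1$) but $\mathrm{GF}(2)$ rank $2$. So the multiplicity-of-zero argument does not recover the quantity you actually need.

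The paper sidesteps this by a field-agnostic device: Tutte's reconstructibility of $\det(A_G)$ (hence of $\det(A_G)\bmod 2$) decides whether $rk(G)=|V(G)|$; if not, then $rk(G)=\max_{v\in V(G)} rk(A_{G_v})$, and the right-hand side is computable from the deck. This works over any field, in particular $\mathrm{GF}(2)$. Your argument is easily repaired along the same lines---indeed you already have the $G_v$'s and their $\mathrm{GF}(2)$ ranks in hand from the Kelly step---but as written the appeal to $P(G;x)$ does not close the case.
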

\begin{proof}{Proof (Sketch)}
The determinant of $A_G$ is reconstructible (see \cite{pr:Tutte79}). $rk(V(G))=|V(G)|$ iff $det(A_G)\not=0$, and if $rk(V(G))\not=|V(G)|$, $rk(G)=max_{v\in V(G)} rk(A_{G_v})$. So we can reconstruct the term for $S=V(G)$. The rest are reconstructible by Kelly's lemma. 
\end{proof}

The $U$ polynomial 
$U(G;\bar{x},y)=\sum_{A\subseteq E(G)}y^{|A|-r(A)}\prod_{D:\Phi_{cc}(D)}x_{|V(D)|},$  where the product iterates over the connected components of the graph $(V,A)$, and $r(A)=|V(G)|-k(A)$, was introduced in \cite{ar:nb99}.  The $U(G;\bar{x},y)$ polynomial
generalizes both the Tutte polynomial and a graph polynomial related 
to Vassiliev invariants of knots. The graph polynomial on colored graphs
$
U_{lab}(G;\bar{x},\bar{y})=\sum_{A\subseteq E}\prod_{e\in A}y_{c(e)}\cdot^{}\prod_{D:\Phi_{cc}(D)}x_{|V(D)|}$
is a generalization of $U$ to colored graphs. We show that $U_{lab}$ is reconstructible (see also \cite{ar:Kocay81}):
\begin{thm}\label{UReconColor}
The $U_{lab}$-polynomial is reconstructible. 
\end{thm}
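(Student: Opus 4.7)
The plan is to reduce the reconstruction of $U_{lab}(G)$ to the reconstruction, for every isomorphism type $T$ of edge-colored graphs on $n=|V(G)|$ vertices, of the number $N_T(G)$ of subsets $A\subseteq E(G)$ for which $(V(G),A,c|_A)\cong T$ as a colored graph. Indeed, the monomial $\mu(T)=\prod_{e\in E(T)} y_{c(e)}\prod_{D}x_{|V(D)|}$ depends only on the isomorphism type of $T$, so
\[
U_{lab}(G)=\sum_{T} N_T(G)\,\mu(T),
\]
and once each $N_T(G)$ is reconstructible the whole polynomial is. Split the types $T$ into those that are disconnected and those that are connected.

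For disconnected $T$, every connected component of $T$ has fewer than $n$ vertices. Kelly's lemma for colored graphs (cited in the excerpt) gives $s(H,G)$ for every colored $H$ with $|V(H)|<|V(G)|$. An essentially routine colored adaptation of the argument underlying Lemma~\ref{lemmaA} then expresses $N_T(G)$ as a polynomial in the numbers $s(F_i,G)$, where $F_1,\dots,F_k$ are the connected components of $T$, with a correction factor accounting for the multiplicities of repeated components and for the colored automorphisms of $T$. This makes $N_T(G)$ reconstructible for every disconnected colored type $T$.

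For connected $T$, the factor $\prod_D x_{|V(D)|}$ equals $x_n$ uniformly, so the connected contribution to $U_{lab}(G)$ is
\[
x_n\cdot\sum_{\bar k} C_{\bar k}(G)\,\prod_{i=1}^{\Lambda} y_i^{k_i},
\]
where $\bar k=(k_1,\dots,k_\Lambda)$ and $C_{\bar k}(G)$ counts connected spanning subgraphs of $G$ with exactly $k_i$ edges of color $i$. I would recover $C_{\bar k}(G)$ by inclusion--exclusion: $C_{\bar k}(G)=E_{\bar k}(G)-D_{\bar k}(G)$, where $E_{\bar k}(G)=\prod_i\binom{\kappa_i}{k_i}$ is the total number of edge subsets with color vector $\bar k$, with $\kappa_i$ the number of color-$i$ edges of $G$, and $D_{\bar k}(G)$ is the corresponding total over disconnected spanning subgraphs. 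Each $\kappa_i$ is reconstructible, since summing the number of color-$i$ edges of $G_v$ over $v\in V(G)$ yields $(n-2)\kappa_i$, so $E_{\bar k}(G)$ is reconstructible. Meanwhile $D_{\bar k}(G)=\sum_{T} N_T(G)$, the sum ranging over disconnected colored types $T$ on $n$ vertices with color vector $\bar k$, which is reconstructible by the previous paragraph.

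I expect the only genuinely nontrivial step to be the first one, namely the colored version of Lemma~\ref{lemmaA}. The combinatorics is standard, but some care is needed when bookkeeping the automorphism groups of colored components and the multiplicities with which repeated colored components occur in $T$; once this is handled, the remaining steps are bookkeeping, and combining the connected and disconnected contributions yields reconstructibility of $U_{lab}(G)$.
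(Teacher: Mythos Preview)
Your proposal is correct and follows essentially the same route as the paper: split the edge subsets $A\subseteq E(G)$ into those for which $(V(G),A)$ is disconnected (handled via the colored Kelly/Kocay argument underlying Lemma~\ref{lemmaA}, expressing $N_T(G)$ in terms of the $s(F_i,G)$ for the components $F_i$ of $T$) and those for which it is connected (handled by subtracting the already-reconstructed disconnected count with color vector $\bar k$ from $\prod_i\binom{\kappa_i}{k_i}$). The paper's sketch additionally singles out edge sets that miss some vertex as a separate preliminary case dealt with directly by Kelly's lemma, but this is absorbed into your disconnected case and is only a cosmetic difference.
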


\begin{proof}{Proof (Sketch)}
 Let $G$ be a colored graph with color-set $\left[\Lambda\right]$.
For $c\in\left[\Lambda\right]$ we denote by $f^{-1}(c)$ the set
of edges colored $c$, and note $|f^{-1}(c)|$ is reconstructible by Kelly's lemma. 
For a tuple $\left(a_{1},\ldots a{}_{\Lambda}\right)\in\mathbb{N}^{\Lambda}$,
we say a subgraph $A\subseteq E(G)$ is $\left(a_{1},\ldots a{}_{\Lambda}\right)$-colored
if for every color $c\in\left[\Lambda\right]$, there are $a_{c}$
many edges colored $c$. For every $\left(a_{1},\ldots a{}_{\Lambda}\right)\in\mathbb{N}^{\Lambda}$,
the number of $\left(a_{1},\ldots a{}_{\Lambda}\right)$-colored subgraphs
is $\prod_{c\in\left[\Lambda\right]}{|f^{-1}(c)| \choose a_{c}}$. 

We reconstruct the terms of $U_{lab}$ that correspond to subgraphs which are
not spanning subgraphs by Kelly's lemma. 
For disconnected spanning subgraphs, we go over all tuples
$\left(F_{1},\ldots,F_{t}\right)$, $t>1$, of isomorphism types of
colored graphs such that the number of vertices in all the $F_{i}$'s
is $\sum_{i=1}^{t}|V(F_{i})|=|V(G)|$. For each such tuple we find
the number of ways of choosing subgraphs $G_{1},\ldots,G_{t}$ of $G$ such that $G_{i}\cong F_{i}$ for every $i$, and their
union $\bigcup_{i=1}^{t}G_{i}$ is $G.$ We can do this by subtracting the number of ways to choose subgraphs as above such that their union does not span $V(G)$ from $s(F_{1},G)\cdots s(F_{t},G)$, the number of ways to choose the $G_i$ without restriction. 

 For any $\left(a_{1},\ldots a,_{\Lambda}\right)\in\mathbb{N}^{\Lambda}$,
the number of $\left(a_{1},\ldots a,_{\Lambda}\right)$-colored spanning
connected subgraphs is $\prod_{c\in\left[\Lambda\right]}{|f^{-1}(c)| \choose  a_{c}}$
minus the number of $\left(a_{1},\ldots a,_{\Lambda}\right)$-colored
subgraphs which are not spanning connected subgraphs. For them, the
corresponding term is $x_{|V(G)|}\prod_{c\in\left[\Lambda\right]}y_{c}^{a_{c}}.$ 

\end{proof}

\begin{cor} Lemma \ref{lemmaA} holds for colored graphs. 
\end{cor}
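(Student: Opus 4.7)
The plan is to observe that the two quantities in Lemma \ref{lemmaA}, read in the colored setting, are precisely the intermediate counts that the proof of Theorem \ref{UReconColor} already reconstructs. The corollary is thus a matter of isolating those steps and verifying that they yield the colored analogue of Lemma \ref{lemmaA} in its own right.

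For the first quantity, I would fix a multiset $\{F_{1}^{m_{1}}, \ldots, F_{k}^{m_{k}}\}$ of colored isomorphism types of connected graphs with $\sum_{j} m_{j} |V(F_{j})| = |V(G)|$, and let $(H_{1}, \ldots, H_{t})$ be the corresponding ordered list of component types. I then count ordered tuples $(G_{1}, \ldots, G_{t})$ of subgraphs of $G$ with $G_{i} \cong H_{i}$ that are pairwise vertex-disjoint and satisfy $\bigcup_{i} V(G_{i}) = V(G)$. Since each $H_{i}$ has fewer than $|V(G)|$ vertices, the unrestricted count $\prod_{i} s(H_{i}, G)$ is reconstructible by the colored Kelly lemma. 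Writing $L(V') = \prod_{i} s(H_{i}, G[V'])$ for the number of tuples whose vertex-union sits inside $V' \subseteq V(G)$, Möbius inversion on the subset lattice yields
\[ T(V(G)) = \sum_{V' \subseteq V(G)} (-1)^{|V(G)| - |V'|} L(V'). \]
Grouping the sum by the colored isomorphism type of $G[V']$ collapses every term with $|V'| < |V(G)|$ into a product $\prod_{i} s(H_{i}, \tau)$ that depends only on the type $\tau$, weighted by the number of $V'$ with $G[V'] \cong \tau$; both factors are reconstructible by colored Kelly. The single contribution with $|V'|=|V(G)|$ comes from $\tau \cong G$ and equals $\prod_{i} s(H_{i}, G)$, itself reconstructible as noted. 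Dividing the resulting ordered count by the symmetry factor $\prod_{j} m_{j}!$ gives the desired number of disconnected spanning subgraphs with the prescribed component multiplicities.

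For the second quantity I would refine by color distribution and count connected spanning subgraphs whose edge set has exactly $a_{c}$ edges of color $c$ for each $c \in [\Lambda]$. The total number of edge subsets with this distribution is $\prod_{c} \binom{|f^{-1}(c)|}{a_{c}}$, and each $|f^{-1}(c)|$ is reconstructible by Kelly. From this total I subtract two already-reconstructible quantities: the number of non-spanning edge subsets with the same color distribution, computed by the same Möbius-over-induced-subgraphs trick applied to edge-subset counts inside proper induced subgraphs; and the number of disconnected spanning subgraphs with color distribution $(a_{c})$, obtained from the first part by summing over all component multisets whose aggregate color distribution is $(a_{c})$. Summing the resulting connected counts over distributions with $\sum_{c} a_{c} = m$ recovers the uncolored formulation.

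The main obstacle is the bookkeeping in the first part: verifying that the Möbius sum really does separate cleanly into one term per isomorphism class of induced subgraph of $G$ with each factor reconstructible, and that the sole contribution on $|V(G)|$ vertices reduces to Kelly's lemma applied to the full product $\prod_{i} s(H_{i}, G)$. Once that step is justified, the remainder is routine counting and inclusion-exclusion on color distributions.
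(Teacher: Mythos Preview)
Your plan is exactly the paper's: the corollary carries no separate proof and is meant to be read off the proof of Theorem~\ref{UReconColor}, which already reconstructs (i) the number of disconnected spanning subgraphs with prescribed colored component types by subtracting from $\prod_i s(F_i,G)$ the tuples whose union is a proper subset of $V(G)$, and (ii) the number of connected spanning subgraphs with a prescribed color profile by subtracting everything else from $\prod_c\binom{|f^{-1}(c)|}{a_c}$. Your M\"obius inversion over $V'\subseteq V(G)$, grouped by the isomorphism type of $G[V']$, is simply the explicit form of that subtraction, and is correct because $\sum_i|V(H_i)|=|V(G)|$ forces any tuple with union $V(G)$ to be vertex-disjoint.

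One small bookkeeping point in your second step: every edge subset $A$ already yields the spanning subgraph $(V(G),A)$, so your ``non-spanning edge subsets'' term is either vacuous or, if you intend it to mean ``$A$ leaves an isolated vertex'', it is already accounted for by the disconnected count from the first part (just allow some $F_j=K_1$). Drop that term, or else restrict the first part to components on at least two vertices, so that you do not subtract those configurations twice.
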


The universal edge elimination polynomial $\xi(G;x,y,z)$ introduced in \cite{pr:agm08}
is defined with respect to three basic operations on edges of the
graph: edge contraction $G_{/e}$, edge deletion $G_{-e}$ and edge elimination $G_{\dagger e}$. It is the most general graph polynomial which satisfies a linear recurrence
relation with respect to the above operations which is multiplicative
with respect to disjoint union and has initial conditions $\xi(E_{1})=x$ and $\xi(\emptyset)=1$. The
$\xi$ polynomial generalizes a number of graph polynomials and graph
parameters which satisfy linear recurrences with respect to these
operations, including the bivariate matching polynomial, the Tutte
polynomial, and the bivariate chromatic polynomial (the latter defined
in \cite{ar:DPT03}). The authors of \cite{pr:agm08} also defined a version $\xi_{lab}$ for colored graphs, given by 
$
 \xi_{lab}(G;x,z,\bar{t})=\sum_{A\sqcup B\subseteq E(G)} x^{k(A\sqcup B)} \left( \prod_{e\in A\sqcup B} t_{c(e)} \right) z^{k_{cov}(B)},
$
where the summation is over subsets $A$ and $B$ of $E(G)$ which
cover disjoint subsets of vertices $V(A)$ and $V(B)$, $k_{cov}(B)$
denotes the number of connected components in the graph $(V(B),B)$,
and $k(A\sqcup B)$ denotes the number of connected components in
$(V(G),A\sqcup B)$, the graph which consists of the vertices of $G$
and the edges of $A$ and $B$.

\begin{thm}
$\xi_{lab}$ and its evaluations, including the colored Sokal polynomial, Zaslavsky's
normal function of the colored matroid, the chain polynomial and the
multivariate matching polynomial, are all reconstructible.
\end{thm}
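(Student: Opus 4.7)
My plan is to reduce the reconstructibility of $\xi_{lab}$ to that of $U_{lab}$, which has already been established in Theorem~\ref{UReconColor}. The idea is to rewrite the double sum defining $\xi_{lab}$ as a single sum over $E' = A \sqcup B \subseteq E(G)$ by grouping together the contributions of all admissible partitions $(A,B)$ of a fixed edge set $E'$.

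The key combinatorial step is the following. For fixed $E' \subseteq E(G)$, the admissible pairs $(A,B)$ with $A \sqcup B = E'$ and $V(A) \cap V(B) = \emptyset$ are in natural bijection with subsets of the set of connected components of the edge-induced subgraph $(V(E'),E')$: the disjointness of $V(A)$ and $V(B)$ forces each component of $(V(E'),E')$ to lie entirely in $A$ or entirely in $B$, and any such assignment is admissible. Writing $k^{*}(E')$ for the number of connected components of $(V(E'),E')$, the sum of $z^{k_{cov}(B)}$ over these partitions collapses to $(1+z)^{k^{*}(E')}$. Combined with the identity $k(A \sqcup B) = (|V(G)|-|V(E')|) + k^{*}(E')$, this yields
$$\xi_{lab}(G;x,z,\bar{t}) \;=\; \sum_{E' \subseteq E(G)} x^{|V(G)|-|V(E')|}\bigl(x(1+z)\bigr)^{k^{*}(E')}\prod_{e \in E'} t_{c(e)},$$
which is precisely the specialization of $U_{lab}(G;\bar{x},\bar{y})$ obtained by setting $x_1 = x$, $x_n = x(1+z)$ for every $n \ge 2$, and $y_c = t_c$ for every color $c$.

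Once this identity is in place the conclusion is immediate: if $G$ and $H$ share a deck, then $U_{lab}(G) = U_{lab}(H)$ as polynomials by Theorem~\ref{UReconColor}, and substituting the values above gives $\xi_{lab}(G;x,z,\bar{t}) = \xi_{lab}(H;x,z,\bar{t})$. Reconstructibility of the listed evaluations---the colored Sokal polynomial, Zaslavsky's normal function of the colored matroid, the chain polynomial, and the multivariate matching polynomial---then follows automatically, since each is obtained from $\xi_{lab}$ by a further substitution into its indeterminates.

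I do not foresee a serious obstacle: the reduction is essentially algebraic, and the one step that needs to be checked carefully is the bijection between admissible partitions of $E'$ and subsets of its edge-induced components. Should a self-contained argument be preferred to a reduction, one could instead mimic the proof of Theorem~\ref{UReconColor} directly---first applying Kelly's lemma to handle the terms in which $A \sqcup B$ does not span $V(G)$, and then recovering the spanning contributions by means of the colored version of Lemma~\ref{lemmaA} furnished by the corollary above---but the reduction is shorter and transparently exposes the relationship between $\xi_{lab}$ and $U_{lab}$.
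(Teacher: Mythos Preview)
Your argument is correct and follows essentially the same route as the paper: both hinge on the observation that, for fixed $E'=A\sqcup B$, the vertex-disjointness condition forces each edge-induced component of $E'$ to lie wholly in $A$ or wholly in $B$, so the $\xi_{lab}$-contribution of $E'$ is determined by the component data recorded in the corresponding $U_{lab}$-monomial. The paper stops at ``choose for each connected component whether it is in $A$ or in $B$'', while you carry this one step further to the explicit substitution $x_1\mapsto x$, $x_n\mapsto x(1+z)$ for $n\ge 2$, $y_c\mapsto t_c$---a tidy repackaging of the same idea.
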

\begin{proof}{Proof (Sketch)}
A monomial of $U_{lab}$ for some $M\subseteq E(G)$
corresponds in $\xi_{lab}$ to monomials for which $A\sqcup B=M$.
Since $A$ and $B$ are disjoint in vertices, every connected
component must be either entirely in $A$ or entirely in $B$. We can reconstruct the number of connected components spanned by $M$ as the number of $x$ variables in the monomial and the number of vertices spanned by $M$ as the sum of the indices of the $x$ variables. Now we need only choose for each connected component whether it is in $A$ or in $B$. 
\end{proof}

Restricting to uncolored graphs we have:

\begin{thm}
The $U$ and $\xi$ polynomials and all of their evaluations, including the bivariate chromatic polynomial, the chromatic
symmetric function $X_{G}$ and the polychromate polynomial, 
are reconstructible.
\end{thm}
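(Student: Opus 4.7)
The plan is to derive this statement as an immediate corollary of the two preceding theorems on $U_{lab}$ and $\xi_{lab}$, by viewing any uncolored graph as a colored graph that uses only one color.

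Given an uncolored graph $G$, let $\tilde{G}$ denote the edge-colored graph on the same vertex and edge set whose coloring $f\colon E(G)\to [1]$ is the unique map into a singleton color-set. The decks of $G$ and $\tilde G$ determine one another, so any reconstruction of $G$ corresponds to a reconstruction of $\tilde G$ and conversely. By Theorem \ref{UReconColor} and the theorem on $\xi_{lab}$, the polynomials $U_{lab}(\tilde G;\bar x,\bar y)$ and $\xi_{lab}(\tilde G;x,z,\bar t)$ are reconstructible from the deck of $G$.

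Next I would verify that $U(G;\bar x,y)$ is a simple substitution-instance of $U_{lab}(\tilde G;\bar x,\bar y)$. Setting $y_1 = y$ in $U_{lab}(\tilde G)$ yields $\sum_{A\subseteq E(G)} y^{|A|}\prod_{D}x_{|V(D)|}$, whereas $U(G;\bar x,y)$ carries the factor $y^{|A|-|V(G)|+k(A)}$; since $k(A)$ equals the number of $x$-factors occurring in a given monomial, it can be read off directly from that monomial, and the rescaling by $y^{-|V(G)|+k(A)}$ recovers $U(G)$. An analogous specialization $t_1 = y$ in $\xi_{lab}(\tilde G)$ recovers $\xi(G;x,y,z)$, since $\xi$ is obtained from $\xi_{lab}$ by identifying all the color-indexed edge variables. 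Hence $U(G)$ and $\xi(G)$ are reconstructible.

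For the remaining statement, I would observe that if a graph polynomial $F$ is reconstructible and $\phi$ is any fixed map from the polynomial ring into another ring, then $\phi\circ F$ is reconstructible as well, because $\phi$ depends only on the polynomial and not on the graph. Since the bivariate chromatic polynomial, the chromatic symmetric function $X_G$, and the polychromate polynomial are known to be evaluations of $U$ or of $\xi$, all of them are reconstructible. I do not expect a genuine obstacle here: the combinatorial work has already been carried out in the colored case, and what remains is essentially the tautological observation that the uncolored polynomials sit inside their colored counterparts as specializations.
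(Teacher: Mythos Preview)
Your proposal is correct and matches the paper's approach: the paper states this theorem without proof, simply prefacing it with ``Restricting to uncolored graphs we have,'' which is precisely your idea of viewing an uncolored graph as a singly-colored one and specializing $U_{lab}$ and $\xi_{lab}$. Your additional care in explaining how to pass from the $y^{|A|}$ weighting in $U_{lab}$ to the $y^{|A|-r(A)}$ weighting in $U$ via a monomial-by-monomial rescaling (reading off $k(A)$ from the number of $x$-factors) is more detail than the paper provides, but the underlying reduction is the same.
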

\ifmargin
\else
\subsection{Reconstructing Neighborhoods}

Let $\mathbf{Nbd}_{G}(v,d)=\left(G_{d,v},l_{v}\right)$ be the isomorphism-type
of the pointed graph \[
Nbd_{G}(v,d)=\left(\left\langle B_{G}(v,d),E(G)\cap\left(B_{G}(v,d)\times B_{G}(v,d)\right)\right\rangle ,v\right).\]
 We call $v$ and $l_{v}$ the leading vertices of $Nbd_{G}(v,d)$
and $\mathbf{Nbd}_{G}(v,d)$ respectively. Let the neighborhood-deck
$\mathcal{N}(G)$ of a graph be the multiset \[
\mathcal{N}(G)=\left\{ \mathbf{Nb}\mathbf{d}_{G}(v,d)\mid v\in V(G)\right\} .\]
 We extend the definition of $\mathcal{N}$ to multisets of graphs.
Given a multiset of graphs $A$, \[
\mathcal{N}(A)=\bigcup_{G'\in A}\mathcal{N}(G').\]
Let $\wp(G)$ be the multiset of induced subgraphs of $G$ and let
\[
\mathcal{ND}(G)=\mathcal{N}(G_{1})\cup\cdots\cup\mathcal{N}(G_{n}).\]

Claim 4: $\mathcal{ND}(G)=\mathcal{N}\left(\left\{ G_{1},\ldots,G_{n}\right\} \right)$

Proof: Follows directly from the definitions. $\square$

Claim 2: Let $R\in\mathcal{ND}(G)$ where $R=\left(G',l\right)$.
Then there exists $v\in V(G)$ such that $R=\mathbf{Nbd}_{G}(v,d)$
or $R=\mathbf{Nbd}_{Nbd_{G}(v,d)-u}(v,d)$ for some $u\in V(Nbd_{G}(v,d))$. 

Proof: $R\in\mathcal{ND}(G)$ implies $R\in\mathcal{N}(G_{i})$ for
some $i$. There exists $v'\in V(G_{i})$ such that $R=\mathbf{Nbd}_{G_{i}}(v',d)$.
Let $v\in V(G)$ be the vertex in $G$ which corresponds to $v'$.
Then there are two cases:
\begin{enumerate}
\item If $v_{i}\in V(Nbd_{G}(v,d))$ then $R=\mathbf{Nbd}_{Nbd_{G}(v,d)-v_{i}}(v,d)$.
\item Otherwise, $\mathbf{Nbd}_{G}(v,d)=\mathbf{Nbd}_{G_{i}}(v',d)$. $\square$ 
\end{enumerate}
Claim 1: If $R\in\mathcal{N}(G)$ then $R\in\mathcal{ND}(G)$. 

Proof: Let $R=\mathbf{Nbd}_{G}(v,d)=(G',v)$. Notice $G'$ is a proper
induced subgraph of $G$ and thus is contained as an induced subgraph
in $n-|V(G')|>0$ cards. This implies $R\in\mathcal{N}(G_{i})$. $\square$

Let \[
\mathcal{B}_{I}(G)=\left\{ \stackrel{n-|V(\mathbf{Nbd}_{G}(v,d))|\mbox{ times}}{\overbrace{\mathbf{Nbd}_{G}(v,d),\ldots,\mathbf{Nbd}_{G}(v,d)}}\mid v\in I\right\} \]
 and let \[
\mathcal{C}_{I}(G)=\left\{ \mathbf{Nbd}_{Nbd_{G}(v,d)-u}(v,d)\mid u\not=v,v\in I,u\in V(Nbd_{G}(v,d))\right\} .\]

Conclusion 6: $\mathcal{ND}(G)=\mathcal{B}_{V(G)}(G)\cup\mathcal{C}_{V(G)}(G)$. 

\underbar{$Create-Neighborhoods(\mathcal{D}(G))$}:
\begin{enumerate}
\item Init $X$ to the empty multiset
\item Init $Y$ to the multiset $\mathcal{ND}(G)$
\item \label{alg:LoopFor}For $m=n-1$ to $m=1$ do,

\begin{enumerate}
\item \label{alg:LoopWhile}While there is a tuple $\left(N'_{1},\ldots,N'_{n-m}\right)\in Y^{n-m}$
of distinct isomorphic graphs of size $m$,

\begin{enumerate}
\item \label{alg:insert}Insert $N'_{1}$ to $X$
\item \label{alg:delete}Delete $N'_{1},\ldots,N'_{n-m}$ from $Y$
\item \label{alg:innerFor}For every proper $N'\subsetneq N'_{1}=(G',v)$
of size $m-1$ which contains $v$,

\begin{enumerate}
\item \label{alg:innerDelete}Delete $\mathbf{Nbd}_{N'}(G',v)$ from $Y$. 
\end{enumerate}
\end{enumerate}
\end{enumerate}
\end{enumerate}
Claim: 
\begin{enumerate}
\item In \ref{alg:LoopWhile}, if there is a graph $N'_{1}$ of size $m$,
then there are at least $n-m$ isomorphic copies of it.
\item \label{hyp2}After \ref{alg:insert} $X$ is a sub-multiset of the
neighborhood-deck of $G$.

Denote by $I_{X}$ the subset of $V(G)$ of vertices which correspond
to the neighborhoods in $X$. 

\item \label{hyp3} After \ref{alg:innerFor} $Y=\mathcal{B}_{V(G)-I_{X}}(G)\cup\mathcal{C}_{V(G)-I_{X}}(G)$
\end{enumerate}
Proof: By induction on the while loop:

Base: A maximal element of $Y$ is a neighborhood of $G$, because
$Y=\mathcal{B}(G)\cup\mathcal{C}(G)$. Thus, it appears at least $n-m$
times in $Y$. It $X$ contains after \ref{alg:insert} exactly one
element of $\mathcal{N}(G)$. 

Step: 
\begin{enumerate}
\item By the induction hypothesis \ref{hyp3}, the maximal element of $Y$
corresponds to a neighborhood of $G$ and occurs $n-m$ times. 
\item Before \ref{alg:insert}, $X$ was a sub-multiset of the neighborhood-deck
of $G$. After adding another neighborhood from $\mathcal{B}_{V(G)-I_{X}}(G)$
to $X$, \ref{hyp2} holds. 
\item We remove in \ref{alg:delete} the $m-n$ copies of the current neighborhood.
In \ref{alg:innerFor} we remove the $m$ graphs obtained from the
current neighborhood by means of deleting a single vertex. Thus, we
remove from $Y$ for some vertex $v$ all the subgraphs which result
in $\mathcal{ND}(G)$ from the neighborhood $\mathbf{Nbd}_{G}(v,d)$.
Thus, $Y=\mathcal{B}_{I-\left\{ v\right\} }(G)\cup\mathcal{C}_{I-\left\{ v\right\} }(G)$. 
\end{enumerate}

\subsection{Enter Logic}
\label{subse:Recog}
In this subsection we show an appliaction of the locality of First Order Logic, 
$FOL$, to the recognizability of graphs. 
As far as we know, there has been no attempt to use logical methods for reconstruction problems. 
Let us define some basics of definability of graphs in First Order Logic, $FOL$.
Let $\tau_{graph} = \left\langle \mathbf{E} \right\rangle$ where $\mathbf{E}$ is a binary relation symbol. 
Graphs are $\tau_{graph}$-structures $G=\left\langle  [n],E \right\rangle$, 
where $[n]=\{0,\ldots,n-1\}$ is the vertex set and 
$E\subseteq [n]\times [n]$ is the edge relation. 
The sentences of $FOL$ are the closure by the boolean connectives $\lor,\land,\neg$
and the quantifiers $\forall x_i, \exists x_i$ of the atomic formulas
$E(x_i,x_j)$ and $x_i = x_j$. E.g., the following formula
\[
  \phi_{graph}=\forall x \forall y \left( \neg E(x,x) \land \left( E(x,y)\to E(y,x)\right) \right)
\]
says a a $\tau_{graph}$ structure is a simple undirected graph. 

For a class of graphs $\mc{C}$ we say the sentence $\phi$ defines $\mc{C}$ if 
a graph $G$ belongs to $\mc{C}$ iff $\phi$ is true for $G$. 
The quantifier rank of a $FOL$ sentence $\phi$ denoted $qr(\phi)$ is 
the maximum depth of the nesting of quantifiers in $\phi$. 
Let $G=(V(G),E(G))$ be a graph. For a vertex $v\in V(G)$, 
let  $Nbd_G(d,v)$ denote the induced subgraph of those vertices of distance at most $d$ from $v$. 

\begin{defn}
Let $G_1,G_2$ be two graphs such that there exists a bijection $f:V(G_1)\to V(G_2)$ such that for every $v \in V(G_1)$, $Nbd_{G_1}(v,d)$ and $Ndb_{G_2}(f(v),d)$ are isomorphic. Then we write $G_1 \leftrightarrows_d G_2$. 
\end{defn}

\begin{defn}
A class of graphs $\mc{C}$ is {\em Hanf local} if  there exists $d$ such that for every two graphs $G_1,G_2\in \mc{C}$,
 $G_1 \leftrightarrows_d G_2$ implies that  $G_1 \in \mc{C}$ iff $G_2 \in \mc{C}$. 
\end{defn}

We use the following theorem, which follows from \cite{ar:FSV95}:
\begin{thm}
\label{thm:FSV95}
 Let $\mc{C}$ be the class of graphs which satisfy a $FOL$ formula $\phi$ of quantifier rank $k$. Then $\mc{C}$ is Hanf local and for $d\geq \frac{3^k-1}{2}$.
\end{thm}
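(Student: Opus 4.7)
The plan is to prove Theorem \ref{thm:FSV95} via the Ehrenfeucht--Fra\"iss\'e game characterization of first order equivalence. Two graphs $G_1, G_2$ satisfy the same $FOL$ sentences of quantifier rank at most $k$ if and only if Duplicator has a winning strategy in the $k$-round EF game played on $(G_1, G_2)$. Consequently, to establish that $\mc{C}$ is Hanf local with radius $d = (3^k-1)/2$, it suffices to show the stronger statement that $G_1 \leftrightarrows_d G_2$ implies Duplicator wins the $k$-round EF game; then every two graphs with the same pointwise $d$-neighborhood structure agree on $\phi$, which is precisely Hanf locality of $\mc{C}$.

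The core step is to describe Duplicator's strategy and invariant. Set $r_i = (3^{k-i}-1)/2$, so that $r_0 = d$, $r_k = 0$, and $r_i = 3 r_{i+1} + 1$. After round $i$ of the game, with pebble pairs $(a_1,b_1),\ldots,(a_i,b_i)$ placed, Duplicator maintains the invariant that the union of $r_i$-neighborhoods around the $a_j$'s in $G_1$ is isomorphic, as a tuple-pointed graph, to the union of $r_i$-neighborhoods around the $b_j$'s in $G_2$, with $a_j$ mapped to $b_j$. The base case $i = 0$ is vacuous, and the Hanf hypothesis $G_1 \leftrightarrows_d G_2$ enters through the far case described next.

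For the inductive step, suppose Spoiler plays a vertex $v$ in (say) $G_1$ during round $i+1$. Duplicator distinguishes two cases. In the \emph{close} case, $v$ lies within distance $2 r_{i+1} + 1$ of some existing pebble $a_j$; since $r_i = 3r_{i+1}+1$, the entire $r_{i+1}$-ball of $v$ sits inside the $r_i$-ball of $a_j$, so the current local isomorphism already supplies Duplicator's reply, and its restriction witnesses the invariant at radius $r_{i+1}$. In the \emph{far} case, Duplicator must produce a vertex $w \in V(G_2)$ whose $r_{i+1}$-ball is isomorphic, as a pointed graph, to that of $v$, and whose $r_{i+1}$-ball is disjoint from the $r_{i+1}$-balls of the current $b_j$'s. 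The existence of $w$ follows from the Hanf bijection $f$ underlying $\leftrightarrows_d$: since $r_{i+1} \leq d$, the multisets of $r_{i+1}$-neighborhood isomorphism types in $G_1$ and $G_2$ coincide, and only boundedly many candidates can be obstructed by the $i$ pebbles already placed.

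The main obstacle, and the source of the exponential factor $3^k$, is the arithmetic tradeoff between preserving local isomorphisms around existing pebbles and leaving enough radius for Spoiler's next move. The factor of $3$ in $r_i = 3 r_{i+1} + 1$ is forced because one needs room for Spoiler to pick a vertex up to distance $r_{i+1} + 1$ beyond the current pebble-balls and then a further $r_{i+1}$ to form the new neighborhood, giving $r_i \geq 2 r_{i+1} + 1 + r_{i+1}$. Verifying that this recursion, unrolled down to $r_k = 0$, is sustainable by the Hanf bijection in the far case --- specifically, that the new $r_{i+1}$-ball can always be chosen disjoint from the old pebble-balls when the neighborhood type is rare --- is the technical heart of the argument and pins the bound at exactly $d = (3^k-1)/2$.
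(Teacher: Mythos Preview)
The paper does not prove this theorem at all; it simply quotes the result from Fagin--Stockmeyer--Vardi \cite{ar:FSV95}. Your proposal is the standard back-and-forth proof of Hanf locality via Ehrenfeucht--Fra\"iss\'e games, which is indeed the argument behind the cited reference (and, earlier, Hanf's original work). The skeleton you give --- the decreasing radii $r_i = (3^{k-i}-1)/2$ with $r_i = 3r_{i+1}+1$, the invariant that the pointed $r_i$-neighborhoods of the pebbled tuples are isomorphic, and the close/far case split --- is correct and is the canonical route.

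One point to tighten in the far case: the phrase ``only boundedly many candidates can be obstructed by the $i$ pebbles already placed'' does not by itself handle the situation where the $r_{i+1}$-neighborhood type of $v$ is rare (say it occurs exactly once in each graph). The clean way to finish is to use the invariant itself: the isomorphism between the union of $r_i$-balls around the $a_j$'s and the union around the $b_j$'s sends each vertex of the given $r_{i+1}$-type that lies within distance $2r_{i+1}+1$ of some $a_j$ to a vertex of the same type within distance $2r_{i+1}+1$ of some $b_j$ (because $2r_{i+1}+1+r_{i+1}=r_i$ ensures the whole $r_{i+1}$-ball stays inside the domain of the isomorphism). Hence the numbers of \emph{close} vertices of that type match, and since the total counts match by the Hanf bijection, the numbers of \emph{far} vertices of that type match too; $v$ is a far one in $G_1$, so a far $w$ exists in $G_2$. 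You correctly flag this step as the technical heart, but the sketch as written leans on a pigeonhole intuition that is not quite the right mechanism.
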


\begin{lem}
Let $G$ and $H$ be graphs of diameter at least $d$  which are reconstructions of each other. Then for every class of graphs $\mc{C}$ which is $FOL$-definable by a sentence $\phi$ or quantifier rank $k\leq\log_3(n+1) -1$ it holds that $G\in\mc{C}$ iff $H\in \mc{C}$. 
 \end{lem}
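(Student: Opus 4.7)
The plan is to reduce the lemma to Hanf locality (Theorem~\ref{thm:FSV95}) via the reconstructibility of the neighborhood deck $\mc{N}(G)$ developed in the previous subsection.

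First, I would take $d = \lceil (3^k-1)/2 \rceil$. The hypothesis $k \leq \log_3(n+1) - 1$ rearranges to $3^{k+1} \leq n+1$, which keeps $d$ small relative to $n$; combined with the assumption that $G$ and $H$ both have diameter at least $d$, this should guarantee that for every vertex $v$ the pointed $d$-neighborhood $Nbd_G(v,d)$ is a proper induced subgraph of $G$ (and similarly in $H$). This properness is precisely the hypothesis that Claim~1 relies on in the $Create$-$Neighborhoods$ algorithm.

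Next I would run that algorithm on $\mc{D}(G)$. By the correctness argument given in the previous subsection, its output is the neighborhood deck $\mc{N}(G) = \{\mathbf{Nbd}_G(v,d) \mid v \in V(G)\}$, as a multiset of isomorphism types of pointed graphs. Since $G$ and $H$ are reconstructions of each other, $\mc{D}(G) = \mc{D}(H)$, so the algorithm returns the same multiset on both inputs and hence $\mc{N}(G) = \mc{N}(H)$. An equality of these two multisets of pointed $d$-neighborhoods is precisely a bijection $f : V(G) \to V(H)$ with $Nbd_G(v,d) \cong Nbd_H(f(v),d)$ for every $v \in V(G)$; in other words, $G \leftrightarrows_d H$. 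Then Theorem~\ref{thm:FSV95} tells me that any class defined by an $FOL$ sentence of quantifier rank $k$ is Hanf local at every distance $\geq (3^k-1)/2$, and my $d$ meets this bound by construction. Hence $G \leftrightarrows_d H$ implies $G \in \mc{C}$ iff $H \in \mc{C}$.

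The main obstacle, I expect, will be the first step: confirming that the combination of the quantifier-rank bound and the diameter hypothesis really forces \emph{every} $d$-neighborhood to be a proper induced subgraph (this is what makes the Kelly-type counting inside the neighborhood-deck algorithm applicable). In particular, one may need to read the diameter hypothesis as a condition on the eccentricity of every vertex rather than the maximum eccentricity alone; if that subtlety is handled correctly, the rest of the argument is a routine assembly of previously established pieces.
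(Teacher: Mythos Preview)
The paper gives no explicit proof of this lemma; it is stated and then immediately followed by the bounded-degree lemma, whose (short) proof uses Kelly's lemma directly. Your approach---reconstruct the $d$-neighborhood deck via the \emph{Create-Neighborhoods} procedure of the preceding subsection and then invoke Hanf locality (Theorem~\ref{thm:FSV95})---is exactly the argument that the paper's setup is pointing toward, and is the natural companion to the bounded-degree proof.

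Your caveat at the end is the real content here and deserves to be said outright rather than hedged. The hypothesis ``diameter at least $d$'' only guarantees that \emph{some} vertex has eccentricity at least $d$; but Claim~1 in the preceding subsection (and hence the correctness of \emph{Create-Neighborhoods}) needs every $Nbd_G(v,d)$ to be a \emph{proper} induced subgraph of $G$, i.e., every vertex must have eccentricity strictly greater than $d$. That is a condition on the radius, not the diameter. Indeed, if $G$ has diameter exactly $d$ then $Nbd_G(v,d)=G$ for every $v$, so no $d$-neighborhood appears in any card and the algorithm cannot recover $\mathcal{N}(G)$. The quantifier-rank bound $k\le\log_3(n+1)-1$ controls $d$ but says nothing about neighborhood sizes in a high-degree graph, so it does not rescue this on its own. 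Under the corrected reading you propose (eccentricity of every vertex exceeds $d$, equivalently radius $>d$), your proof goes through verbatim; as literally stated in the paper the hypothesis is too weak.
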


%\begin{proof}
%By section \ref{se:neighborhoods}
%$G$ and $H$ have the same number of $d-$type neighborhood, 
%for $d<\frac{1}{2}n$, since these neighborhood are of size at most $n-1$. 
%Thus, there is a bijection which implies $G \leftrightarrows_d H$. 
%On the other hand, let $\mc{C}$ be the class of graphs defined by $\phi$. 
%By theorem \ref{thm:FSV95}, $\mc{C}$ is Hanf local for $d\geq \frac {3^k-1}{2}$. 
%Thus, if $\frac{1}{2}n > \frac {3^k-1}{2}$ then either both $G$ and $H$ belong to $\mc{C}$ or both do not.
%\end{proof}
\begin{lem}
Let $G$ be a graph of bounded degree $f$. Let $H$ be a reconstruction
of $G$. Then $G$ and $H$ agree on all $FOL$ sentences of quantifier
rank less than $\log_{3}\log_{f}\left(n-1\right)$. \end{lem}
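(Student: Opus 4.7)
The plan is to specialize the neighborhood-deck reconstruction of the preceding subsection to the bounded-degree setting and then apply Hanf locality via Theorem~\ref{thm:FSV95}. The role of the degree bound is simply to keep $d$-neighborhoods small enough to be proper induced subgraphs of $G$, taking over the function that the diameter bound plays in the previous lemma.

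First I would bound the neighborhood size. In a graph of maximum degree $f \geq 2$, $|B_G(v,d)| \leq 1 + f + f^2 + \cdots + f^d \leq f^{d+1}$. Choosing $d$ to be the largest integer with $f^{d+1} \leq n-1$ yields $d \geq \log_f(n-1) - 1$ and ensures that every $\mathbf{Nbd}_G(v,d)$ is a proper induced subgraph of $G$.

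Next, for this $d$, the multiset $\mathcal{ND}(G) = \bigcup_{u \in V(G)} \mathcal{N}(G_u)$ can be read off each card of the deck, and the Create-Neighborhoods procedure of the preceding subsection recovers $\mathcal{N}(G)$ from it. Since $H$ shares a deck with $G$, it shares the same $\mathcal{N}$, i.e., $G \leftrightarrows_d H$. Applying Theorem~\ref{thm:FSV95} then gives agreement on every $FOL$ sentence of quantifier rank $k$ with $k \leq \log_3(2d+1)$. Substituting the lower bound on $d$ yields $k \leq \log_3(2\log_f(n-1) - 1)$, which in particular covers every $k < \log_3 \log_f(n-1)$.

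The core difficulty is the peeling argument implicit in the Create-Neighborhoods procedure --- namely, distinguishing a genuine $d$-neighborhood from a one-vertex-deleted shadow of a bigger one inside the noisy multiset $\mathcal{ND}(G)$ --- but that step is already handled in the preceding subsection. What remains here is arithmetic: matching the neighborhood-size bound $f^{d+1} \leq n-1$ against the Hanf-locality requirement $(3^k-1)/2 \leq d$, and checking that after integer rounding the chosen $d$ still satisfies $d \geq \log_f(n-1) - 1$ comfortably enough to absorb the slack against $\log_3\log_f(n-1)$.
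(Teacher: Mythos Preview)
Your proposal is correct and follows the same line as the paper: use the degree bound to force all $d$-neighborhoods to be proper induced subgraphs, reconstruct the multiset of $d$-neighborhood types from the deck, and then invoke Hanf locality via Theorem~\ref{thm:FSV95}. The paper's proof is terser---it appeals to Kelly's lemma directly rather than spelling out the Create-Neighborhoods step, and it uses the cruder estimate $|Nbd(d,v)|\le f^{d}$---but the structure of the argument is identical.
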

\begin{proof}
If $G$ has maximal degree $\leq f$ then so does $H$, because the
degree sequence is reconstructible. Look at $Nbd(d,v).$ Since the
degree is bounded, the size of $Nbd(d,v)$ is bounded by $f^{d}.$
So, if $d=\log_{f}(n-1)$, then $f^{d}<n$ and thus by Kelly's lemma,
$G$ and $H$ have the same number of $d-$type neighborhood. By \cite{ar:FSV95}, $H$ and $G$ agree on all $FOL$
sentences $\Phi$ such that $3^{qr(\Phi)}\leq d$. So, $H$ and $G$
agree on all $FOL$ sentences such that $qr(\Phi)\leq\log_{3}\log_{f}(n-1)$. 
\end{proof}

\begin{thm}
Let $\mathcal{G}_{n}=\left(G_{1},G_{2},G_{3}\ldots\right)$ be a sequence
of graphs such that $|V(G_{i})|=i$ and there is $f\in\mathbb{N}$
that bounds the degree of all $G_{i}$'s. Let $\mathcal{C}$ be a
recognizable class of graphs. If there is a formula $\phi\in FOL$
such that for a graph $G\in\mathcal{C}$ of size $n$, $G\models\phi$
iff $G$ is isomorphic to $G_i$, then for every $n>n_{\phi}$, $G_{n}$ is reconstructible.\end{thm}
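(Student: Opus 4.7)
{Proof Plan}
The strategy is to combine the recognizability of $\mathcal{C}$ with the previous lemma, which lets reconstructions of bounded-degree graphs agree on $FOL$ sentences of small quantifier rank. Fix $n_\phi$ large enough that $\log_3\log_f(n-1) > qr(\phi)$ for all $n>n_\phi$; this is the threshold we shall use.

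Let $n>n_\phi$ and let $H$ be any reconstruction of $G_n$. First, since $\mathcal{C}$ is recognizable, $G_n\in\mathcal{C}$ forces $H\in\mathcal{C}$. Second, the degree sequence is reconstructible from the deck (it is recovered from $\sum_v (\deg_{G_v}(\cdot))$ via Kelly's lemma), so $H$ also has maximum degree at most $f$. Thus both $G_n$ and $H$ satisfy the hypotheses of the previous lemma with the same parameter $f$.

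By that lemma, $G_n$ and $H$ agree on every $FOL$ sentence of quantifier rank strictly less than $\log_3\log_f(n-1)$. By our choice of $n_\phi$, the sentence $\phi$ is such a sentence, so $G_n\models\phi$ if and only if $H\models\phi$. Since $G_n\in\mathcal{C}$ satisfies $\phi$ by construction, so does $H$. But the defining property of $\phi$ is that within $\mathcal{C}$ it singles out $G_n$ up to isomorphism, so $H\cong G_n$. Thus $G_n$ is reconstructible.

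The one place that needs care is verifying that the previous lemma applies uniformly to $H$: we must know not just that $G_n$ has bounded degree, but that every reconstruction does. This is immediate from reconstructibility of the degree sequence, so no real obstacle arises; the whole argument is a packaging of Hanf locality (Theorem \ref{thm:FSV95}) with Kelly's lemma and the recognizability of $\mathcal{C}$.
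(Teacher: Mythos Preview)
Your argument is correct and follows the same route as the paper's proof: invoke the preceding bounded-degree lemma to force any reconstruction $H$ of $G_n$ to agree with $G_n$ on $\phi$, then use recognizability of $\mathcal{C}$ together with the defining property of $\phi$ to conclude $H\cong G_n$. The paper's proof is terser (and pins down $n_\phi$ explicitly), but you have in fact spelled out the use of recognizability more carefully than the paper does.
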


\begin{proof}
Let $n_{\phi}=3^{f^{qr(\phi)}}+1$ where $qr(\phi)$ is the quantifier
rank of $\phi$. If $|V(G)|=|V(G_{i})|>n_{\phi}$ then $G$ and $G_{i}$
agree on $\phi$, so $G\cong G_{i}$. 
\end{proof}

\begin{cor}
The following are reconstructible for large enough $n$: Clique $K_n$, Ladder $L_n$, book graph, gear graph, prism graph, anti-prism graph, Mo\"bius graph, crossed-prism graph and grid $G_{n_1\times n_2}$.
\end{cor}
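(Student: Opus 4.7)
The plan is to apply the preceding theorem to each family in turn by verifying its three hypotheses: (a) the ambient class $\mathcal{C}$ is recognizable, (b) every member of $\mathcal{C}$ has degree bounded by a constant $f$ independent of $n$, and (c) a first-order sentence $\phi$ of quantifier rank independent of $n$ pins down $G_n$ among the size-$n$ members of $\mathcal{C}$.

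For the ladder $L_n$, the prism, the M\"obius ladder, the crossed-prism, the anti-prism, and the grid $G_{n_1\times n_2}$, every vertex has degree at most $4$ independently of $n$, so (b) holds with a uniform $f$. Each of these families contains a single graph per size (or per pair of sizes in the grid case) up to isomorphism, so (c) is met by taking $\phi$ to be a tautology once membership in $\mathcal{C}$ is known. For (a), I plan to write down for each family a small-quantifier-rank FOL sentence describing its characteristic local structure (degree conditions together with local adjacency patterns around every vertex), and invoke the preceding bounded-degree lemma to conclude that such a FOL class is recognizable within the bounded-degree regime.

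The clique $K_n$ has degree $n-1$, so the theorem does not apply verbatim. I will handle it directly by noting that the deck of $K_n$ consists of $n$ copies of $K_{n-1}$ and that only $K_n$ has this deck. The book and gear graphs have a hub of unbounded degree but bounded degree elsewhere; the hub is identifiable from the reconstructible degree sequence, and an adapted version of the theorem, applied to the bounded-degree induced subgraph that remains after distinguishing the hub, then yields reconstructibility.

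The main obstacle will be verifying recognizability for each class, since this amounts to extracting a global structural property from the deck. For the bounded-degree families it reduces, via the preceding locality lemma and Kelly's lemma, to producing a small-quantifier-rank FOL description of the class, which is standard; for the unbounded-degree exceptions the argument instead leans on the high-degree vertex acting as a rigid combinatorial anchor visible in the deck.
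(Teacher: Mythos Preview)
Your plan inverts the paper's decomposition: the paper keeps $\mathcal{C}$ simple---almost always $\mathcal{C}=\mathrm{CON}$, the class of connected graphs, which is classically recognizable---and writes out an explicit FOL sentence $\phi$ (degree constraints, forbidden triangles, prescribed $4$-cycles, etc.) that singles out $G_n$ among connected graphs of size $n$; you instead take $\mathcal{C}$ to be the family itself and set $\phi$ to a tautology. The difficulty with your version is that these families are \emph{not} FOL-definable: connectivity is not Hanf-local, and any bounded-quantifier-rank sentence $\psi$ describing only degrees and local adjacency patterns is also satisfied by disjoint unions carrying the same local data (e.g.\ two disjoint prisms satisfy every local description a single prism does). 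Hence the $\psi$-class strictly contains the family, and a tautological $\phi$ no longer isolates a unique graph of each size. Repairing this by adjoining connectivity to $\mathcal{C}$ and pushing the local description into $\phi$ lands you exactly on the paper's approach. There is a second gap for the grid: several $G_{n_1\times n_2}$ can share the same vertex count, so even among connected grids there is no unique graph per $n$; the paper resolves this by putting a deck-based condition into $\mathcal{C}$ (the deck contains a corner-deleted grid of the required shape), which is trivially recognizable and fixes $(n_1,n_2)$.

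You are, on the other hand, more careful than the paper about the unbounded-degree cases: the paper lists clique, book and gear in the same $(\mathcal{C},\phi)$ format without addressing the failure of the degree hypothesis in the preceding theorem, whereas your direct argument for $K_n$ and your plan to anchor the book and gear arguments on the reconstructible high-degree vertex are sound ways to fill that in.
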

\begin{proof}
We give the classes $\mc C$ and the defining $FOL$ formulae:
\begin{itemize}
\item Clique: $\mc C=CON$, $\phi=\forall x,y\,(x\not=y\leftrightarrow E(x,y))$
\item Ladder:  $\mathcal{C}=CON$, 
$\phi=\exists x,y,z,w,\,\left|\left\{ x,y,z,w\right\} \right|=4\land d(x)=d(y)=d(z)=d(w)=2\land E(x,y)\land E(z,w)$
\[
\land\forall t\notin\left\{ x,y,z,w\right\} ,d(t)=3\land\forall t,s,r,\,\neg E(t,s)\lor\neg E(s,r)\lor\neg E(r,t)\]
\[
\land\forall t,s,r,p,\,\left(E(t,s)\land E(s,r)\land E(r,p)\right)\to E(t,p).\]
where $d(x)$ is the degree of $x$.
\item Book graph: $\mathcal{C}=CON\land n\geq8$, 
$\phi=\exists x,y,(x\not=y)\land\forall z\notin\left\{ x,y\right\} ,d(z)=2\land(E(x,z)\lor E(y,z))\land(E(x,z)\leftrightarrow\neg E(y,z))\land$
$
\forall w\notin\left\{ x,y,z\right\} ,E(z,w)\to\left(E(x,z)\leftrightarrow\neg E(x,w)\right)$

\item Gear graph: $\mathcal{C}=CON$ and there is $G_{i}\in\mathcal{D}(G)$
such that $G_{i}\cong C_{n-1}$ and $n\geq8$, 
$\phi=\exists x,\, d(x)>3\forall y,z\not=x,\, d(y)\in\left\{ 2,3\right\} \land d(y)=2\leftrightarrow d(z)=3\land(d(y)=3\leftrightarrow E(y,x))$
\item Grid $G_{n\times n}$ for $n=p\cdot q$ and $p,q$ prime numbers -
$\mathcal{C}=CON$ and $\phi$ is defined as requiring that there
are 4 distinct vertices of degree $2$ and all the rest of degree
3 and 4, and by requiring that the every vertex belong to subgraph
of size $4,6$ or $9$ (for degree 2, 3 or 4 respectively) such that
the vertices in this subgraph have prescribed degrees (for example,
for $x$ of degree 2 we require a square $4-3-2-3$ where the first
4 is adjacent also to the last 3).
\item Grid $G_{n_1\times n_2}$ for every  (large enough) $n_1$ and $n_2$,  by taking the same defining formula as before and $\mathcal{C}=CON\land ALMOST-GRID(n_1,n_2)$, where $ALMOST-GRID(n_1,n_2)$ is the class of graphs that have in their deck a graph isomorphic to a grid that has had one of its corners deleted. This is obviously recognizable, and the grid is thus determined to be of size $n_1\times n_2$, which means it is $G_{n_1\times n_2}$. 
\item Prism graph, Mo\"bius graph, crossed prism graph and antiprism graph - the proof is similar to that of the ladder graph. 
\end{itemize}
\end{proof}

\fi %skip

\nocite{ar:BH77}
\nocite{pr:Bondy91}
\nocite{pr:Tutte79}
%\nocite{ar:Makowsky06}

\begin{paragraph}{Acknowledgement}
\footnotesize
 I am grateful to my PhD supervisor J.A. Makowsky, for his advise and support,
and to I. Averbouch, L. Lyaudet and N. Zewi for useful discussions on issues related
to this work.
\end{paragraph}

%%%%%%%%%%%%%%%%%%%%%%%%%%%%%%%%%%%%%%%%%%%%%%%%%%%%%%%%%%%%%%%%%%%%%%%%55
%\bibliography{endm_readable}{}
%\bibliographystyle{plain}
%%%%%%%%%%%%%%%%%%%%%%%%%%%%%%%%%%%%%%%%%%%%%%%%%%%%%%%%%%%%%%%%%%%%%%%%55

\end{document}